\newtheorem{thm}{Theorem}
\newtheorem{cor}{Corollary}
\newtheorem{lem}{Lemma}
\newtheorem{rem}{Remark}
\newtheorem{conj}{Conjecture}
\newtheorem{prob}{Problem}
\theoremstyle{definition}
\newtheorem{defn}{Definition}[section]
\newtheorem{example}{Example}
\newcounter{alphabet}
\newcounter{tmp}
\newenvironment{Thm}[1][]{\refstepcounter{alphabet}%
\bigskip%
\noindent%
{\bf Theorem \Alph{alphabet}}%
\ifthenelse{\equal{#1}{}}{}{ (#1)}%
{\bf .} \itshape}{\vskip 8pt}
\renewcommand{\Ref}[1]{\@ifundefined{r@#1}{}{\setcounter{tmp}{\ref{#1}}\Alph{tmp}}}
\newcommand{\es}{{\mathcal S}}
\newcommand{\IC}{{\mathbb C}}
\newcommand{\ID}{{\mathbb D}}
\def\be{\begin{equation}}
\def\ee{\end{equation}}
\newcommand{\bee}{\begin{enumerate}}
\newcommand{\eee}{\end{enumerate}}
\newcommand{\blem}{\begin{lem}}
\newcommand{\elem}{\end{lem}}
\newcommand{\bthm}{\begin{thm}}
\newcommand{\ethm}{\end{thm}}
\newcommand{\bcor}{\begin{cor}}
\newcommand{\ecor}{\end{cor}}
\newcommand{\beg}{\begin{example}}
\newcommand{\eeg}{\end{example}}
\newcommand{\begs}{\begin{examples}}
\newcommand{\eegs}{\end{examples}}
\newcommand{\bdefe}{\begin{defn}}
\newcommand{\edefe}{\end{defn}}
\newcommand{\bprob}{\begin{prob}}
\newcommand{\eprob}{\end{prob}}
\newcommand{\bques}{\begin{ques}}
\newcommand{\eques}{\end{ques}}
\newcommand{\bei}{\begin{itemize}}
\newcommand{\eei}{\end{itemize}}
\newcommand{\bcon}{\begin{conj}}
\newcommand{\econ}{\end{conj}}
\newcommand{\bcons}{\begin{conjs}}
\newcommand{\econs}{\end{conjs}}
\newcommand{\bprop}{\begin{propo}}
\newcommand{\eprop}{\end{propo}}
\newcommand{\br}{\begin{rem}}
\newcommand{\er}{\end{rem}}
\newcommand{\brs}{\begin{rems}}
\newcommand{\ers}{\end{rems}}
\newcommand{\bo}{\begin{obser}}
\newcommand{\eo}{\end{obser}}
\newcommand{\bos}{\begin{obsers}}
\newcommand{\eos}{\end{obsers}}
\newcommand{\bpf}{\begin{proof}}
\newcommand{\epf}{\end{proof}}
\newcommand{\ba}{\begin{array}}
\newcommand{\ea}{\end{array}}
\newcommand{\beq}{\begin{align}}
\newcommand{\beqq}{\begin{align*}}
\newcommand{\eeq}{\end{align}}
\newcommand{\eeqq}{\end{align*}}
\def\cc{\setcounter{equation}{0} 
\setcounter{figure}{0}\setcounter{table}{0}}
\newcounter{minutes}\setcounter{minutes}{\time}
\newcounter{hours}\setcounter{hours}{\time}
\begin{document}

\bibliographystyle{amsplain}
\title[Length of ray images under starlike mappings of order $\alpha$]{A proof of Hall's conjecture on length of ray images under starlike mappings of order $\alpha$}

\thanks{
File:~\jobname .tex,
 printed: \number\day-\number\month-\number\year,
 \thehours.\ifnum\theminutes<10{0}\fi\theminutes}

\author[P. H\"{a}st\"{o}]{Peter H\"{a}st\"{o}}
\address{P. H\"{a}st\"{o},
Department of Mathematics and Statistics, FI-20014,
University of Turku, Turku, Finland
}
\email{pealha@utu.fi}

\author[S. Ponnusamy]{Saminathan Ponnusamy
}
\address{S. Ponnusamy, Department of Mathematics,
Indian Institute of Technology Madras, Chennai-600 036, India}
\email{samy@iitm.ac.in}

\subjclass[2000]{Primary 30C45, 30C20; Secondary 30C35
}
\keywords{Ray-image, length of ray-image, starlike and univalent mappings,
starlike functions of order $\alpha$.
\\
}


\begin{abstract}
Assume that $f$ lies in the class of starlike functions of order $\alpha \in [0,1)$, that is, which
are regular and univalent for $|z|<1$ and such that
\[{\rm Re} \left (\frac{zf'(z)}{f(z)} \right ) > \alpha ~\mbox{ for } |z|<1.
\]
In this paper we show that for each $\alpha \in [0,1)$, the following sharp inequality holds:
\[
|f(re^{i\theta})|^{-1} \int_{0}^{r}|f'(ue^{i\theta})| du \leq \frac{\Gamma (\frac12)\Gamma (2-\alpha )}{\Gamma (\frac32-\alpha )}
~\mbox {for every $r<1$ and $\theta$}.
\]
This settles the conjecture of Hall (1980) positively.
\end{abstract}


\maketitle
\pagestyle{myheadings}
\markboth{P. H\"{a}st\"{o} and S. Ponnusamy}{Length of ray images under starlike mappings of order $\alpha$}
\cc

\section{Introduction and the Main Theorem}
The theory of univalent functions on domains in the complex plane $\IC$ attracted the attention of many for more than
a century, and it has been centered around the class $\mathcal S$ of functions $f$ regular and univalent in the unit
disk $\ID =\{z:\,|z|<1\}$ and normalized by the condition $f(0)=f'(0)-1=0.$ The conjecture of Bieberbach
which asserted $|f^{(n)}(0)/n!|\leq n$ for all $n\geq 2$ (if $f\in {\mathcal S}$), was solved by de Branges \cite{DeB1}
in 1984. The family $\mathcal S$ together with some of its geometric subfamilies play a key role in solving many
extremal problems, and a large amount of research has been done as evidenced by the volume of articles
in the literature (cf. \cite{Du,Gol,Good83,Hay55,PommUniv-75,R1}) and several monographs (cf. \cite{GraKoh2003,Lehto,PommBBCM-92}).
It is still an active field of research in view of several open problems and extensions in several settings \cite{AW-09,HayLing19},
including planar harmonic univalent mappings \cite{CS,DuHar}.

This article concerns length of ray images under a special class of conformal mappings.
Suppose that $f\in {\mathcal F}\subset {\mathcal S}$ and $f$ maps $\ID$ onto a domain $D$. Let $C(r, \theta)$ denote the
image in $D$ of the ray joining $z=0$ to $z=re^{i\theta}\in \ID$ under the mapping $w=f(z)$ belonging to the family ${\mathcal F}$.
Then the length $\ell (r, \theta)$ of the curve $C(r, \theta)$ is given by
\[
\ell (r, \theta):=\int_{0}^{r}|f'(\rho e^{i\theta})|\, d\rho .
\]
In 1963, Gehring and Hayman \cite{GH} showed that if $f\in {\mathcal S}^*\subset {\mathcal S}$, i.e. $f(\ID)$ is starlike (with respect to the origin),
then there exists an absolute constant $M>0$ such that
\be\label{HS-eq0}
\ell (r, \theta)\leq M|f(re^{i\theta})| ~\mbox{ for every $r < 1$ and $\theta$.}
\ee
We refer to this as Gehring--Hayman inequality.
Motivated by this remarkable fact, Sheil-Small \cite{Sh} showed that if $f\in {\mathcal S}^*$, then the constant $M$ in \eqref{HS-eq0}
can be chosen to be $1+\log 4$, and if $ f\in {\mathcal S}^*(\frac12)\subset {\mathcal S}^*   :={\mathcal S}^*(0)$  (see equation \eqref{HP-eq1}),
then the constant may be reduced to $1+\log 2$. Further investigation in this topic led
Sheil-Small \cite{Sh} to conjecture that if $f\in {\mathcal K}\subset {\mathcal S}^*(\frac12)$, i.e $f(\ID)$ is convex, then
the correct constant is $\frac\pi2$. Hall \cite{Ha1,Ha2} showed that the best possible constants are $2$ and $\frac\pi 2$
for the families ${\mathcal S}^*$ and ${\mathcal S}^*(\frac12)$, respectively. This settled both the conjectures of Sheil-Small.
See \cite{BP-96} for a simpler proof
of Gehring--Hayman inequality \eqref{HS-eq0} with $M=2$ for the case of univalent starlike functions.
At this point it is worth recalling the fact that a function belonging to ${\mathcal S}^*(\frac12)$ may not be convex
univalent in $|z|< R$ for any $R >\sqrt{2\sqrt{3}-3}= 0.68$.
It is natural to ask for the corresponding optimal constant $M$ in \eqref{HS-eq0} for several other choices of the family
${\mathcal F}\subset {\mathcal S}$.

In this article, we consider a problem posed by Hall \cite{Ha2}. More precisely, Hall in this paper related the following:
\begin{quote}
{\em
At the Durham Symposium on Analytic Number Theory (July 1979) Professor
Hayman asked in conversation what would be the sharp bound for the class ${\mathcal S}^*(\alpha )$ of
functions starlike of order $\alpha$, that is, which are regular and univalent for $|z| < 1$ and
such that
\be\label{HP-eq1}
{\rm Re}\left( \frac{zf'(z)}{f(z)}\right) >\alpha ~\mbox{ for $|z|<1$}.
\ee
I proved in \cite{Ha1} that in the starlike case, that is when $\alpha =0$, this bound is $2$ (sharp for the Koebe function) and it
is likely that for $0<\alpha <1$ the sharp constant is
\[\frac{\Gamma (\frac12)\Gamma (2-\alpha )}{\Gamma (\frac32-\alpha )}.
\]
From my result for $\alpha =0$, the upper bound $1+ (1-\alpha) (\log 4)^{\alpha}$ can be derived: this is not sharp but numerically it is
pretty good, for example for $\alpha=\frac12$ it gives $1.588\ldots$.
}
\end{quote}

In view of the higher difficulty level of the problem, determining the optimal constant $M$ in \eqref{HS-eq0}
for other choices of the family ${\mathcal F}\subset {\mathcal S}$ is difficult and
results of this type were not available
for many standard geometric subclasses of the univalent family $\mathcal S$.

In the present paper we prove the above conjecture of Hall in full generality for
the class ${\mathcal S}^*(\alpha )$ of functions starlike of order $\alpha$, $0\leq \alpha <1$.
It is worth pointing out that the present method of proof provides also alternate proofs of the two cases,
${\mathcal S}^*(0)$ and ${\mathcal S}^*(\frac12)$, originally settled by Hall \cite{Ha1,Ha2}.


\bthm \label{HS-thm1}
Suppose that $f\in {\mathcal S}^*(\alpha )$, i.e.\ $f$ is a starlike of order $\alpha$ in the unit disk $\ID$. Then
\be\label{HS-eq1}
|f(re^{i\theta})|^{-1} \ell (r, \theta) \leq \beta (\alpha) ~\mbox {for every $r<1$ and $\theta$},
\ee
where $\ell (r, \theta):=\int_{0}^{r}|f'(\rho e^{i\theta})|\, d\rho $ and
\be\label{HS-eq2}
\beta (\alpha ):= \frac{\Gamma (\frac12)\Gamma (2-\alpha )}{\Gamma (\frac32-\alpha )}.
\ee
Furthermore, the constant $\beta(\alpha)$ is optimal.
\ethm

We refer to \cite{BKP, Ka} for some additional research related to Hall's work and conjectures on optimal constants in the Gehring--Hayman inequality.
Related to this problem, we remark that  an attempt has been made by Chen and Ponnusamy \cite{CP2019}
for sense-preserving univalent and $K$-quasiconformal harmonic mappings. In order to make a statement about what this means, we need to introduce some basic notations.

Let $f$ be a complex-valued $C^1$-function defined on  $\ID$ and let
$\ell_{f}(\theta,r)$ be the length of the curve $f|_{[0,z]}$, where $[0, z]$ is a radial line segment from $0$ to
$z=re^{i\theta}\in \ID$, $\theta\in[0,2\pi]$ is fixed and
$r\in[0,1)$. Then (cf. \cite{CLP})
\[\ell_{f}(\theta,r):=\ell\big(f([0, z])\big)=\int_{0}^{r}\left |\,df(\rho
e^{i\theta})\right |=\int_{0}^{r}\left |f_{z}(\rho
e^{i\theta})+e^{-2i\theta}f_{\overline{z}}(\rho e^{i\theta})\right
|\,d\rho.
\]
In \cite{Ke}, Keogh showed that if $f$ is a bounded, analytic and
univalent function in $\mathbb{D}$, then, for each $\theta\in[0,2\pi]$,
\be\label{eq-cpk1}
\ell_{f}(\theta,r)=O \left ( \psi(r)
 \right) ~\mbox{ as $r\rightarrow
1^{-}$},
\ee
where $\psi(r)=\big(\log (1/(1-r) )\big)^{1/2}$  for $0<r<1$, and the exponent $1/2$ in $\psi(r)$ cannot be decreased.
Kennedy \cite{Ken} showed by  examples that
\[\ell_{f}(\theta,r)=O (\mu(r)\psi(r) )~\mbox{as $r\rightarrow 1^{-}$}
\]
is false in general for every positive function $\mu$ in $[0,1)$ satisfying $\mu(r)\rightarrow0$ as
$r\rightarrow1^{-}$. In \cite{CT}, Carroll and Twomey proved this result without the boundedness condition in the following form.

\begin{Thm}\label{Thm-CT}
Suppose that $f(z)=a_{1}z+a_{2}z^{2}+\cdots$ is univalent in
$\mathbb{D}$. Then, for  any fixed $\theta\in[0,2\pi]$, there is a
constant $C_{1}>0$ such that
\be\label{CP-extraeq1}
\ell_{f}(\theta,r)\leq C_{1}\max_{\rho\in[0,r]}|f(\rho e^{i\theta})| \psi(r) 
~\mbox{ for $r\in (0.5,1)$}. \ee If, further, $f(re^{i\theta})=O(1)$
as $r\rightarrow1^{-},$ then {\rm (\ref{eq-cpk1})} holds.
\end{Thm}

Later, Beardon and Carne \cite{BC} gave a relatively simple argument to Theorem~A 
in hyperbolic geometry and provided further examples. Thus, the two works of Hall mentioned in the introduction are sharper versions of
this in the case of functions whose range is either a starlike domain or a convex domain. In spite of the
higher level of difficulty, ideas of \cite{CT,BC} were considered for the class ${\mathcal S}_{H}$ of sense-preserving planar harmonic
univalent mappings $f=h+\overline{g}$ in $\mathbb{D}$, with the normalization $h(0)=g(0)=0$ and $h'(0)=1$
(see \cite{CS,Du}).
The  family ${\mathcal S}_{H}$ together with few geometric subclasses were investigated in \cite{CS,Small}.
For further details, we refer to  \cite{Du,SaRa2013}. If the co-analytic part $g$ is identically zero in the representation $f=h+\overline{g}$, then the class
${\mathcal S}_{H}$ coincides with the family $\mathcal S$. Motivated by the above consideration, in 2019, Chen and Ponnusamy \cite{CP2019}
obtained the following result for the case of planar harmonic mappings.


\begin{Thm}\label{thm-CPx}
For $K\geq1$, let $f\in {\mathcal S}_{H}$ be a $K$-quasiconformal
harmonic mapping. Then, for any fixed $\theta\in[0,2\pi]$, there is
a constant $C_{2}>0$ such that
\[\ell_{f}(\theta,r)\leq C_{2}\max_{\rho\in[0,r]}|f(\rho e^{i\theta})| \psi(r) ~\mbox{ for $r\in (0.5,1)$}.
\]
If, further, $f(re^{i\theta})=O(1)$ as $r\rightarrow1^{-},$ then
\[\ell_{f}(\theta,r)= O (\psi(r))~\mbox{ as $r\rightarrow1^{-}$},
\]
and the exponent $1/2$ in $\psi(r)$  defined above cannot be replaced by a smaller number.
\end{Thm}

First we remark that  Theorem~B   implies Theorem~A when $K=1$.
Secondly, the proof of Theorem~B is relatively harder than the proof
of Theorem~A because of the fact that the arguments of  Beardon and Carne \cite{BC} for Theorem~A
are not applicable in the proof of Theorem~B.

\section{Proof of Theorem \ref{HS-thm1} \label{HS1-20-sec2}}

\subsection{Part 1: Proof of the main Theorem 
}

\blem \label{HS-lem1}
Suppose that $f\in {\mathcal S}^*(\alpha )$. Then the desired inequality \eqref{HS-eq1} holds whenever
\begin{equation}\label{eq:mainClaim}
I(s,t)+I(t,s)<2(\beta(\alpha)-1)\quad\mbox{ for $s,t\in (0,\pi)$ }.
\end{equation}
where
\begin{align}\label{eq11}
I(s,t)&=\int_0^1\left\{ \frac{\sqrt{1+(1-2\alpha)^2u^2+2(1-2\alpha) u\cos t}}{\sqrt{1+u^2-2u\cos
t}} -\frac{1-(1-2\alpha)u^2-2\alpha u\cos t}{1+u^2-2u\cos t}\right\}\times \nonumber\\
&\hspace{3cm}\left\{\frac{2(1-\cos s)}{1+u^2-2u\cos s}\right\}^{1-\alpha}\,du.
\end{align}
\elem
\begin{proof}
The family ${\mathcal S}^*(\alpha )$ is rotationally invariant in the sense that $e^{-i\theta}f(e^{i\theta}z)$ belongs to ${\mathcal S}^*(\alpha )$
whenever $f\in {\mathcal S}^*(\alpha )$. Therefore, without loss of generality, let us suppose that $\theta =0$ in \eqref{HS-eq1}. As a consequence, we
let $h(z)= f(rz)$, $r\in (0,1)$. Then $h$ is regular and univalent for $|z|\le 1$, $h(0)=0$ and  $h(1)= f(r)$.
Therefore to prove \eqref{HS-eq1} we have to show equivalently
that
\begin{equation}\label{eq4}
\int_0^1|h'(u)|\,du\le \beta (\alpha)|h(1)|,
\end{equation}
where $\beta(\alpha)$ is defined by \eqref{HS-eq2}. It remains to show that \eqref{eq4} holds whenever \eqref{eq:mainClaim} holds.

Now, we let $f\in\es^*(\alpha)$. Then, we have
\[H(z):=\frac{zh'(z)}{h(z)} =\frac{rzf'(rz)}{f(rz)}
\quad\text{and}\quad
 {\rm Re}\,H(z)>\alpha,\quad z=re^{i\theta}\in \overline{\ID}.
\]
Using the Herglotz representation theorem for regular functions with positive real part (cf. \cite{Good83,PommUniv-75,R1})
and the fact that $h\in\es^*(\alpha)$, we also have, for $z\in\ID$,
\begin{equation}\label{eq5}
H(z)=\frac{zh'(z)}{h(z)}=\frac{1}{2\pi}\int_{-\pi}^\pi
\frac{1+(1-2\alpha)ze^{-it}}{1-ze^{-it}}\,dV(t),
\end{equation}
where $V(t)$ is an increasing function for
$t\in[-\pi,\pi]$ which satisfies $\frac{V(\pi)-V(-\pi)}{2\pi}=1$.
Therefore, using standard arguments and some computations, we find that
\[ H(u)= \int_{0}^\pi
\frac{1+(1-2\alpha)ue^{-it}}{1-ue^{-it}}\,dW(t),
\]
and
\begin{equation}\label{eq6}
\begin{split}
\frac{\partial}{\partial u}\log|h(u)|
&= u^{-1}{\rm Re}\,H(u)
= \int_0^\pi\frac{1-(1-2\alpha)u^2-2\alpha u\cos t}{u(1+u^2-2u
\cos t )}\,dW(t),
\end{split}
\end{equation}
where $W(t):=\frac{V(t)-V(-t)}{2\pi}$. Note that $W(0)=0$, $W(\pi)=1$ and
$W$ is increasing on $[0,\pi]$ and so $dW$ is nonnegative and has a
total mass $1$. Using (\ref{eq5}) it follows that
\begin{equation}\label{eq7}
\begin{split}
|H(u)|-{\rm Re}\,H(u)
& \le
\int_0^\pi\bigg [\frac{\sqrt{1+(1-2\alpha)^2u^2+2(1-2\alpha) u\cos t}}{\sqrt{1+u^2-2u\cos t}} \\
&\hspace{3cm} -\frac{1-(1-2\alpha)u^2-2\alpha u\cos t}{1+u^2-2u\cos t} \bigg ]\,dW(t) .
\end{split}
\end{equation}

Next we note from the definition of $H(z)$ that
\begin{equation}\label{eq8}
\begin{split}
\int_0^1|h'(u)|\,du
& =  \int_0^1|H(u)|\, |h(u)|u^{-1}\,du\\
& = \int_0^1{\rm Re}\,H(u)|h(u)|u^{-1}\,du+\int_0^1[|H(u)|-{\rm Re}\,H(u)]|h(u)|u^{-1}\,du.
\end{split}
\end{equation}
Regarding the first integral on the right, we find by (\ref{eq6}) that
\begin{equation}\label{eq12}
\int_0^1{\rm Re}\,H(u)|h(u)|u^{-1}\,du= \int_0^1 |h(u)|\frac{\partial}{\partial u}\log|h(u)|\,du   =|h(1)|.
\end{equation}
We then estimate the second of the integrals in (\ref{eq8}). From (\ref{eq6}) we also have
\begin{align*}
\log\left\{\frac{|h(u)|}{|h(1)|}\right\}
& = \int_1^u\frac{\partial}{\partial v}\log |h(v)|\,dv\\
& = \int_1^u v^{-1}{\rm Re}\,H(v)\,dv\\
& =  \int_0^\pi \int_1^u\frac{1-(1-2\alpha)v^2-2\alpha v\cos t}{v(1+v^2-2v\cos t)}\,dv\, dW(t)\\
& =  \int_0^\pi\log\left\{\frac{u(2-2\cos t)^{1-\alpha}}{(1+u^2-2u\cos t)^{1-\alpha}}\right\}\,dW(t).
\end{align*}
Applying Jensen's inequality \cite[p.~24]{Zy} and performing exponentiation on both sides of the last relation, we get
\begin{equation}\label{eq9}
|h(u)|u^{-1} \le |h(1)| \int_0^\pi \left\{\frac{2(1-\cos t)}{1+u^2-2u\cos t}\right\}^{1-\alpha}\,dW(t).
\end{equation}
Therefore, from (\ref{eq7}) and (\ref{eq9}) we deduce that
\begin{equation}\label{eq10}
\begin{split}
\int_0^1\{|H(u)|-{\rm Re}\,H(u)\}|h(u)|u^{-1}\,du
& \le  |h(1)|\int_0^\pi\int_0^\pi I(s,t)\,dW(t)\,dW(s)\\
& \le  \frac{|h(1)|}{2}\int_0^\pi\int_0^\pi[I(s,t)+I(t,s)]\,dW(t)\,dW(s),
\end{split}
\end{equation}
where $I(s,t)$ is given by \eqref{eq11}.

Thus to complete the proof of the inequality (\ref{eq4}), using
(\ref{eq8}), (\ref{eq12}) and (\ref{eq10}), it suffices to show
\be\label{eq10-ex1}
\sup \{I(s,t)+I(t,s):\,0\le t\le \pi,~0\le s\le \pi\}\le 2(\beta(\alpha)-1). \qedhere
\ee
\end{proof}

\subsection{Part 2: Proof of the Inequality \eqref{eq:mainClaim} 
}\label{hp-sec2-2}

To establish the inequality \eqref{eq:mainClaim}, we need to evaluate the integrals $I(t,s)$ and $I(s,t)$,
where $I(t,s)$ is defined by \eqref{eq11}. In order to do this, we rewrite \eqref{eq11} in the following form
\be\label{HS-eq5}
I(s,t)=[2(1-\cos s)]^{1-\alpha}[J(t,s)-K(t,s)],
\ee
where
\[J(s,t)=\int_0^1\frac{ \sqrt{(1+(1-2\alpha)u)^2-2(1-2\alpha) u(1-\cos t)}}{\sqrt{(1+u^2-2u\cos t)}(1+u^2-2u\cos s)^{1-\alpha}}\,du,
\]
and
\[K(s,t)=\int_0^1\frac{[1-(1-2\alpha)u^2-2\alpha u\cos t] }{(1+u^2-2u\cos t)(1+u^2-2u\cos s)^{1-\alpha}} \,du .
\]
In order to prove the inequality \eqref{eq:mainClaim}, we need to establish several lemmas.

Let us denote $S:=2(1-\cos s)$, $T:=2(1-\cos t)$ and $\gamma:=1-2\alpha$ so that $S,T\in (0,4)$ and $\gamma\in (-1,1]$. Then
\eqref{HS-eq5} can be written in terms of $S$ and $T$, which we denote by $I(S,T)$ for obvious reason, and thus, we have
\[
I(S,T) = \int_0^1 \bigg(\frac{S}{(1-u)^2+Su}\bigg)^{\frac{1+\gamma}2}
\bigg[ \frac{\sqrt{(1+\gamma u)^2-\gamma T u}}{\sqrt{(1-u)^2+Tu}} - \frac{1-\gamma u^2-(1-\gamma)(1-\frac T2)u}{(1-u)^2+Tu}\bigg] du.
\]


Our first aim is to give an upper bound for the sum $I(S,T)+ I(T,S)$ in terms of a simpler
integrand. We begin by giving the bound for the
first term in the square bracket factor in the integrand of $I(S,T)$.

\begin{lem}\label{lem:firstTerm}
For $T\in (0,4)$, $\gamma\in (-1,1]$ and $u\in (0,1)$,
\be\label{HS-eq6}
\frac{\sqrt{(1+\gamma u)^2-\gamma T u}}{\sqrt{(1-u)^2+Tu}}
\le
\frac{1+\gamma}2 \frac{1+u}{\sqrt{(1-u)^2+Tu}}
+
\frac{1-\gamma}2 .
\ee
\end{lem}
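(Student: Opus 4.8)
The plan is to clear the common positive factor and reduce \eqref{HS-eq6} to a statement about a single square root that can be attacked by squaring. Writing $D:=\sqrt{(1-u)^2+Tu}>0$, the asserted inequality is equivalent, after multiplying through by $D$, to
\[
\sqrt{(1+\gamma u)^2-\gamma Tu}\ \le\ \frac{1+\gamma}{2}(1+u)+\frac{1-\gamma}{2}\,D.
\]
Before squaring I would check that both sides are nonnegative. The right-hand side is manifestly so, since $\gamma\in(-1,1]$ forces $\frac{1+\gamma}{2}>0$ and $\frac{1-\gamma}{2}\ge 0$, while $1+u,D>0$. For the left-hand side it suffices to see that the radicand is nonnegative: when $\gamma<0$ the term $-\gamma Tu$ is positive, and when $\gamma\ge 0$ the bound $T<4$ gives $(1+\gamma u)^2-\gamma Tu\ge(1+\gamma u)^2-4\gamma u=(1-\gamma u)^2\ge 0$.

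With both sides nonnegative, squaring is an equivalence. I would write $L^2=(1+\gamma u)^2-\gamma Tu$ for the left-hand square and expand the right-hand square as $R^2=P+\frac{1-\gamma^2}{2}(1+u)D$, where the polynomial part is $P=\frac{(1+\gamma)^2}{4}(1+u)^2+\frac{(1-\gamma)^2}{4}\big[(1-u)^2+Tu\big]$ and the remaining term carries the only surviving square root. A direct expansion, using the identity $\gamma+\frac{(1-\gamma)^2}{4}=\frac{(1+\gamma)^2}{4}$, should collapse the polynomial difference to the clean form
\[
L^2-P=\frac{(1-\gamma^2)(1-u^2)}{2}-\frac{(1+\gamma)^2}{4}\,Tu .
\]
Thus the target $L^2\le R^2$ becomes $L^2-P\le\frac{1-\gamma^2}{2}(1+u)D$, which, after factoring $1-u^2=(1-u)(1+u)$, rearranges into
\[
\frac{(1-\gamma^2)(1+u)}{2}\big[(1-u)-D\big]\ \le\ \frac{(1+\gamma)^2}{4}\,Tu .
\]

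The concluding step is then essentially free: since $D^2=(1-u)^2+Tu\ge(1-u)^2$ and $1-u>0$, we have $D\ge 1-u$, so the bracket $(1-u)-D$ is nonpositive and the entire left-hand side is $\le 0$, while the right-hand side is $\ge 0$. (In the degenerate case $\gamma=1$ the coefficient $1-\gamma^2$ vanishes, and the inequality reads $L^2-P=-Tu\le 0$, which holds trivially.) I would emphasize that the one potentially delicate point is the temptation to replace $L^2$ by the convex combination $\frac{1+\gamma}{2}(1+u)^2+\frac{1-\gamma}{2}D^2$ of squares and then take roots: that estimate runs in the wrong direction, because $\sqrt{\lambda a^2+(1-\lambda)b^2}\ge\lambda a+(1-\lambda)b$. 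Keeping the cross term $\frac{1-\gamma^2}{2}(1+u)D$ intact and disposing of it through the elementary bound $D\ge 1-u$ is what makes the argument go through. Accordingly, I expect the only real labor to be the bookkeeping that produces the compact expression for $L^2-P$; once that identity is in hand the obstacle is algebraic rather than conceptual.
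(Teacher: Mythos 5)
Your proof is correct, but it follows a genuinely different route from the paper's. The paper uses the same convex splitting $1+\gamma u=\frac{1+\gamma}{2}(1+u)+\frac{1-\gamma}{2}(1-u)$ to reduce \eqref{HS-eq6} to the inequality
\[
\frac{1}{1-\gamma}\Big[\sqrt{(1+\gamma u)^2-\gamma Tu}-(1+\gamma u)\Big]\le\frac12\Big[\sqrt{(1-u)^2+Tu}-(1-u)\Big],
\]
which it then disposes of by a sign observation when $\gamma\ge0$ (the left side is nonpositive) and, when $\gamma<0$, by a calculus argument: setting $b=-\gamma$, the difference of the two sides vanishes at $T=0$ and is shown to be increasing in $T$ via $1+b\ge2\sqrt b$. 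You instead clear the denominator, verify both sides are nonnegative, square, and observe that the polynomial part collapses (via $\gamma+\frac{(1-\gamma)^2}{4}=\frac{(1+\gamma)^2}{4}$) so that everything reduces to the elementary bound $\sqrt{(1-u)^2+Tu}\ge 1-u$. I checked your identity $L^2-P=\frac{(1-\gamma^2)(1-u^2)}{2}-\frac{(1+\gamma)^2}{4}Tu$ and it is correct. Your argument buys uniformity and economy: there is no case split on the sign of $\gamma$, no separate treatment of $\gamma=1$ (the paper must exclude it at the outset because it divides by $1-\gamma$), and no differentiation --- only one squaring step whose legitimacy you justify. The paper's version, in exchange, avoids the expansion bookkeeping and makes visible that for $\gamma\ge0$ the inequality holds for a purely structural reason, with the monotonicity in $T$ carrying the case $\gamma<0$. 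Your closing caution about the wrong-direction estimate $\sqrt{\lambda a^2+(1-\lambda)b^2}\ge\lambda a+(1-\lambda)b$ is well taken; keeping the cross term and killing it with $D\ge 1-u$ is indeed the step that makes the direct approach work.
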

\begin{proof}
The claim is clear if  $\gamma =1$, so we assume that $\gamma\in (-1,1)$.
As 
\[1+\gamma u=\frac{1+\gamma}2 (1+u) + \frac{1-\gamma}2 (1-u),
\] 
we calculate
\[
\frac{1+\gamma u}{\sqrt{(1-u)^2+Tu}}
=
\frac{1+\gamma}2 \frac{1+u}{\sqrt{(1-u)^2+Tu}} 
+
\frac{1-\gamma}2 \frac{1-u}{\sqrt{(1-u)^2+Tu}} .
\]
Subtracting this from the inequality in the statement of the lemma, we see that the claim \eqref{HS-eq6} is equivalent to
\[
\frac{\sqrt{(1+\gamma u)^2-\gamma T u}-(1+\gamma u)}{\sqrt{(1-u)^2+Tu}}
\le
\frac{1-\gamma}2
\bigg[ 1 -
\frac{1-u}{\sqrt{(1-u)^2+Tu}} \bigg],
\]
or, multiplied by $\frac1{1-\gamma}\sqrt{(1-u)^2+Tu}$,
\be\label{HS-eq7}
\frac1{1-\gamma}\Big[\sqrt{(1+\gamma u)^2-\gamma T u}-(1+\gamma u)\Big]
\le
\frac12
\big[ \sqrt{(1-u)^2+Tu} - (1-u) \big].
\ee
When $\gamma\ge 0$, the left-hand side of \eqref{HS-eq7} is non-positive, so the claim is clear, and therefore, we may assume that $\gamma<0$ and
denote $b:=-\gamma>0$, where $0<b<1$. When $T=0$, both sides equal $0$, so the inequality holds. We may next rewrite \eqref{HS-eq7} equivalently as
$\varphi (T)\geq 0$, where
\[ \varphi (T) = \frac12 \big[ \sqrt{(1-u)^2+Tu} - (1-u) \big]
- \frac1{1+b}\big[\sqrt{(1-b u)^2+bT u}-(1-b u)\big].
\]
We observed that $\varphi (0)=0$ and thus it suffices to show that $\varphi$ is increasing on $(0,4)$. We calculate
\[ \varphi '(T) =
\frac14\,
\frac{u}{ \sqrt{(1-u)^2+Tu}} - \frac1{2(1+b)}\frac{b u}{\sqrt{(1-b u)^2 +bT u}}
\]
and it is non-negative when
\[
\sqrt{(1-b u)^2+b T u} \ge \frac{2b}{1+b}\sqrt{(1-u)^2+Tu}.
\]
Because $1+b\geq 2\sqrt{b}$, the last inequality holds if
\[
\sqrt{(1-b u)^2+b T u} \ge \sqrt{b}\sqrt{(1-u)^2+Tu}.
\]
Squaring both sides gives the equivalent condition
\[
(1-b u)^2+b T u \ge b ((1-u)^2+Tu)
\quad\Leftrightarrow\quad
1-b \ge b(1-b)u^2,
\]
which holds since $b\in (0,1)$ and $u\in (0,1)$. Thus, $\varphi (T)\geq \varphi (0)=0$ and the proof of the lemma is complete.
\end{proof}


\begin{lem}\label{lem:inequality}
Let $a:= \frac TS$, $a\in (0,\infty )$. Then with $S, T\in (0,4)$ and $I(S,T)$ defined as above, we have
\[
I(S,T) + I(T,S)
\le
\frac{1+\gamma}2 \int_0^\infty
\bigg[ (a +w)^{-\frac{1+\gamma}2} + \big(\tfrac1a +w\big)^{-\frac{1+\gamma}2} \bigg]
\frac{\sqrt{1+w}-1}{1+w} w^{-\frac{1-\gamma}2}\, dw,
\]
where $\gamma\in (-1,1]$.
\end{lem}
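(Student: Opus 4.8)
The plan is to feed the majorant from Lemma~\ref{lem:firstTerm} into the integrand of $I(S,T)$, watch the bracketed factor collapse algebraically, and then apply a single well-chosen substitution that turns the whole thing into the displayed integral; the bound for $I(T,S)$ then follows by symmetry.

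First I would replace the square-root term $\sqrt{(1+\gamma u)^2-\gamma Tu}\,/\sqrt{(1-u)^2+Tu}$ in the bracket of $I(S,T)$ by its upper bound from Lemma~\ref{lem:firstTerm}. The bracket then reads
\[
\frac{1+\gamma}{2}\,\frac{1+u}{\sqrt{(1-u)^2+Tu}} + \frac{1-\gamma}{2} - \frac{1-\gamma u^2-(1-\gamma)(1-\tfrac T2)u}{(1-u)^2+Tu}.
\]
The decisive simplification is that the last two terms combine into $\frac{1+\gamma}{2}\frac{u^2-1}{(1-u)^2+Tu}$: upon clearing the denominator $(1-u)^2+Tu$, the constant and the linear-in-$u$ contributions cancel identically, and only the quadratic term survives. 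Writing $D=(1-u)^2+Tu$, the bracket is thus dominated by $\frac{1+\gamma}{2}\big[(1+u)D^{-1/2}+(u^2-1)D^{-1}\big]$.

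The key move is the substitution $w=Tu/(1-u)^2$, under which $u\in(0,1)$ runs bijectively over $w\in(0,\infty)$ and $D=(1-u)^2(1+w)$. With this, the bracket becomes $\frac{1+\gamma}{2}\frac{1+u}{1-u}\frac{\sqrt{1+w}-1}{1+w}$. The same substitution also tames the prefactor: from $(1-u)^2=Tu/w$ one obtains $(1-u)^2+Su=uS(a+w)/w$ with $a=T/S$, hence $S/((1-u)^2+Su)=w/(u(a+w))$; and since $dw=T(1+u)(1-u)^{-3}\,du$ we have $\frac{1+u}{1-u}\,du=\frac uw\,dw$. Recombining all powers of $u$ and $w$, the integrand for $I(S,T)$ takes the form $u^{(1-\gamma)/2}\,w^{-(1-\gamma)/2}\,(a+w)^{-(1+\gamma)/2}\,\frac{\sqrt{1+w}-1}{1+w}$.

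The final step is to discard the stray factor $u^{(1-\gamma)/2}$: since $u\in(0,1)$ and $\gamma\le1$ we have $u^{(1-\gamma)/2}\le1$, so bounding it above by $1$ yields the asserted estimate for $I(S,T)$. Repeating the identical computation for $I(T,S)$ merely swaps the roles of $S$ and $T$, which replaces $a$ by $1/a$, and summing the two bounds gives the claim. I expect the only genuine obstacle to be choosing the substitution so that the $T$-dependence of the square-root term and the $S$-dependence of the prefactor unwind simultaneously — and recognizing that it is precisely the harmless bound $u^{(1-\gamma)/2}\le1$ that converts the underlying identity into the stated inequality.
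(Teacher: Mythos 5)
Your proposal is correct and follows essentially the same route as the paper: the same application of Lemma~\ref{lem:firstTerm}, the same algebraic cancellation reducing the bracket to $\frac{1+\gamma}2(1+u)\big[D^{-1/2}-(1-u)D^{-1}\big]$, and the same substitution $w=Tu/(1-u)^2$. Your final step of discarding $u^{(1-\gamma)/2}\le 1$ is exactly the paper's bound $T^{-1/2}(1-u)\le w^{-1/2}$ in disguise (since $T^{-1/2}(1-u)=\sqrt{u/w}$), only reached more directly without solving the quadratic for $1-u$.
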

\begin{proof}
By Lemma~\ref{lem:firstTerm}, we recall that
\[
\frac{\sqrt{(1+\gamma u)^2-\gamma T u}}{\sqrt{(1-u)^2+Tu}}
\le
\frac{1+\gamma}2 \left (\frac{1+u}{\sqrt{(1-u)^2+Tu}} \right )
+
\frac{1-\gamma}2.
\]
For the numerator of the integrand of $K(s,t)$ with the change of variables as in the beginning of Subsection \ref{hp-sec2-2}, i.e. 
precisely the second term of the square-bracked term in the expression of $I(S,T)$,  we use
\[1-\gamma u^2-(1-\gamma)\Big (1-\frac T2\Big)u = \frac{1+\gamma}2 \big (1- u^2 \big) + \frac{1-\gamma}2 \big ((1-u)^2+Tu \big)
\]
so that
\[
\frac{1-\gamma u^2-(1-\gamma)(1-\frac T2)u}{(1-u)^2+Tu}
=
\frac{1+\gamma}2\left (\frac{1- u^2}{(1-u)^2+Tu} \right )+ \frac{1-\gamma}2.
\]
Using these relations, we can therefore estimate
\begin{align*}
&\frac{\sqrt{(1+\gamma u)^2-\gamma T u}}{\sqrt{(1-u)^2+T u}} - \frac{1-\gamma u^2-(1-\gamma)(1-\frac{T}{2})u}{(1-u)^2+Tu} \\
& \qquad \le
\frac{1+\gamma}2 \bigg[\frac{1+u}{\sqrt{(1-u)^2+Tu}} - \frac{1- u^2}{(1-u)^2+Tu} \bigg]
 \\
& \qquad=
\frac{1+\gamma}2 \bigg[\frac{1}{\sqrt{(1-u)^2+Tu}} - \frac{1- u}{(1-u)^2+Tu} \bigg] (1+u).
\end{align*}
Thus, we have established the inequality
\begin{align*}
I(S,T) &\le
\frac{1+\gamma}2 \int_0^1 \bigg(\frac{S}{(1-u)^2+Su}\bigg)^{\frac{1+\gamma}2}
\bigg[\frac{1}{\sqrt{(1-u)^2+Tu}} - \frac{1- u}{(1-u)^2+Tu} \bigg] (1+u)\, du\\
& = \frac{1+\gamma}{2} \int_0^1 \bigg(\frac{S}{1+\frac{Su}{(1-u)^2}}\bigg)^{\frac{1+\gamma}2}
\bigg[\frac{1}{\sqrt{1+ \frac{Tu}{(1-u)^2}}} - \frac{1}{1+ \frac{Tu}{(1-u)^2}} \bigg]  \frac{1+u}{(1-u)^{2+\gamma}}\, du 
\end{align*}
and for $I(T,S)$, it follows similarly that
\begin{align*}
I(S,T) \le
\frac{1+\gamma}{2} \int_0^1 \bigg(\frac{T}{1+\frac{Tu}{(1-u)^2}}\bigg)^{\frac{1+\gamma}2}
\bigg[\frac{1}{\sqrt{1+ \frac{Su}{(1-u)^2}}} - \frac{1}{1+ \frac{Su}{(1-u)^2}} \bigg]  \frac{1+u}{(1-u)^{2+\gamma}}\, du .
\end{align*}
 
Let us continue with the change of variables
\[w:= \frac {Tu} {(1-u)^2}.
\]
Then
\[dw =T \frac{1+u}{(1-u)^3}\, du
\]
so that
\begin{align*}
I(S,T)
&\le
\frac{1+\gamma}2 \int_0^1 \bigg(\frac{S}{1+\frac ST w}\bigg)^{\frac{1+\gamma}2}
\bigg[\frac{1}{\sqrt{1+w}} - \frac{1}{1+w} \bigg] \frac{1+u}{(1-u)^{2+\gamma}}\, du \\
& =
\frac{1+\gamma}2 \int_0^\infty \bigg(\frac{\frac ST}{1+\frac ST w}\bigg)^{\frac{1+\gamma}2}
\frac{\sqrt{1+w}-1}{1+w} [T^{-\frac12}(1-u)]^{1-\gamma}\, dw.
\end{align*}
From the relation $(1-u)^2 = \frac Tw u$, or equivalently the quadratic equation 
\[w(1-u)^2 +T(1-u)-T=0,\]
we solve for $1-u$ with the restriction $0<u<1$:
\[
1-u = \frac12 \left (-\frac Tw + \sqrt{\Big (\frac Tw\Big )^2 +4\frac Tw}\right )
=\frac{\sqrt{T^2+4Tw}-T}{2w}
= \frac{2T}{\sqrt{T^2+4Tw}+T}
\]
so that
\[1-u =\frac{2\sqrt{T}}{\sqrt{T+4w}+\sqrt{T}}\leq \frac{\sqrt{T}}{\sqrt{w}}, ~\mbox{ i.e., }~T^{-\frac12}(1-u) \le \frac1{\sqrt{w}}.
\]
Therefore, we conclude that
\[
I(S,T)
\le
\frac{1+\gamma}2 \int_0^\infty \bigg(\frac{1}{a +w}\bigg)^{\frac{1+\gamma}2}
\frac{\sqrt{1+w}-1}{1+w}  w^{-\frac{1-\gamma}2}\, dw,
\]
where $a= \frac TS \in (0,\infty )$. Interchanging the role of $S$ and $T$ 
in the above proof gives an analogous inequality for $I(T,S)$:
\[
I(T,S)
\le
 \frac{1+\gamma}2 \int_0^\infty \bigg(\frac{1}{b +w}\bigg)^{\frac{1+\gamma}2}
\frac{\sqrt{1+w}-1}{1+w} w^{-\frac{1-\gamma}2}\, dw, \quad b  =\frac{S}{T}  \in (0,\infty).
\]
Note that $b =1/a$.
Finally, adding these two estimates, we obtain the desired claim.
\end{proof}

Let us next consider the expression in the case $\gamma=1$. Based on previous research, it is
already known that the expression in Lemma~\ref{lem:inequality} is maximized when $a=1$. However,
we will need the monotonicity, which is a stronger claim.

\begin{lem}\label{lem:increasing}
The function
\be\label{HS-eq6-ex1}
a\longmapsto
\int_0^\infty \bigg[ \frac1{a+w} + \frac1{\frac1a+w} \bigg] \frac{\sqrt{1+w}-1}{1+w}  \, dw  =:G_1(a)
\ee
is increasing in $(0,1)$.
\end{lem}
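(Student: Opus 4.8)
The plan is to show $G_1$ is increasing by proving $G_1'(a)>0$ for $a\in(0,1)$. First I would differentiate under the integral sign; this is justified because on any compact subinterval of $(0,1)$ the $a$-derivative of the integrand is dominated by a fixed integrable function (the weight $\frac{\sqrt{1+w}-1}{1+w}$ decays like $w^{-1/2}$ and the rational factors like $w^{-2}$, so there is no issue at $w=0$ or $w=\infty$). Writing the second kernel as $\frac{1}{\frac1a+w}=\frac{a}{1+aw}$, the two terms differentiate to $-\frac{1}{(a+w)^2}$ and $\frac{1}{(1+aw)^2}$ respectively, giving
\[
G_1'(a)=\int_0^\infty \left[\frac{1}{(1+aw)^2}-\frac{1}{(a+w)^2}\right]\frac{\sqrt{1+w}-1}{1+w}\,dw.
\]

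Next I would place the bracket over a common denominator. Since $(a+w)^2-(1+aw)^2=(w-1)(1-a)\cdot(1+w)(1+a)=(1-a^2)(w-1)(1+w)$, and the factor $(1+w)$ cancels against the denominator of the weight via $(1+w)\cdot\frac{\sqrt{1+w}-1}{1+w}=\sqrt{1+w}-1$, this collapses to
\[
G_1'(a)=(1-a^2)\int_0^\infty \frac{(w-1)(\sqrt{1+w}-1)}{(1+aw)^2(a+w)^2}\,dw=:(1-a^2)\,I(a).
\]
Because $1-a^2>0$ on $(0,1)$, the whole claim reduces to proving $I(a)>0$.

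The main obstacle is that the factor $(w-1)$ changes sign at $w=1$, so the integrand of $I(a)$ is not of one sign and no pointwise bound suffices. To overcome this I would fold the integral about $w=1$ using the involution $w\mapsto 1/w$: substituting $w=1/s$ on the piece $\int_0^1$, and using that the denominator transforms cleanly, namely $(1+a/s)^2(a+1/s)^2=s^{-4}(1+as)^2(a+s)^2$, the two halves merge into a single integral over $(1,\infty)$,
\[
I(a)=\int_1^\infty \frac{N(w)}{(1+aw)^2(a+w)^2}\,(w-1)\,dw,
\]
where the combined numerator simplifies to $N(w)=(\sqrt{w}-1)\big(\sqrt{w}+1-\sqrt{w+1}\big)$. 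For $w>1$ both factors are positive — the first trivially, and the second because $(\sqrt{w}+1)^2=w+2\sqrt{w}+1>w+1$ — and $(w-1)>0$ as well, so the integrand is strictly positive and $I(a)>0$. Hence $G_1'(a)>0$, proving $G_1$ is increasing on $(0,1)$. The one delicate point is checking that the folded numerator really collapses to the clean form $N(w)$; this is where I expect to need care, but it is a routine algebraic simplification once the substitution $w\mapsto 1/w$ is carried out.
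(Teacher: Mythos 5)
Your proof is correct, and it takes a genuinely different route from the one in the paper. The paper evaluates $G_1(a)$ in closed form as
\[
2\sqrt{\tfrac{a}{1-a}}\,\tan^{-1}\Big(\sqrt{\tfrac{1-a}{a}}\Big)
+\tfrac{1}{\sqrt{1-a}}\log\Big(\tfrac{1+\sqrt{1-a}}{1-\sqrt{1-a}}\Big)
-\tfrac{1+a}{1-a}\ln\tfrac1a
\]
and then proves monotonicity of this explicit expression through a cascade of substitutions and sign analyses of successive derivatives (the functions $G$, $g$, $h$, $k$ in the paper), each step anchored at a vanishing limit at the endpoint. You instead differentiate under the integral sign --- justified, since for $a$ in a compact subset of $(0,1)$ the differentiated integrand is $O(w)$ near $0$ (the weight vanishes like $w/2$ there, which is the relevant point rather than the $w^{-1/2}$ decay you quote, though your conclusion stands) and $O(w^{-5/2})$ near $\infty$ --- then factor the difference of kernels via $(a+w)^2-(1+aw)^2=(1-a^2)(w-1)(1+w)$, and neutralize the sign change of $(w-1)$ by folding with $w\mapsto 1/w$. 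I checked the folded numerator: writing $x=\sqrt{w}$ and $y=\sqrt{w+1}$, the combined bracket is $y-1-xy+x^2=(1-x)(y-1-x)=(x-1)(x+1-y)$, which is exactly your $N(w)=(\sqrt{w}-1)(\sqrt{w}+1-\sqrt{w+1})$, and both factors are indeed positive for $w>1$. Your argument is shorter, self-contained, and yields the strict inequality $G_1'(a)>0$; what it gives up is only the explicit formula for $G_1$, which the paper displays and graphs but does not actually need beyond its monotonicity.
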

\begin{proof}
It turns out that we can explicitly calculate the integrals involved in the expression. Since
\[
\frac1{(a+w)(1+w)} = \frac1{1-a}\bigg[\frac1{a+w} - \frac1{1+w}\bigg],
\]
we calculate
\[
\int_0^\infty \frac{dw}{(a+w)(1+w)} = \left . \frac1{1-a} \ln \left (\frac{a+w}{1+w} \right ) \right |_0^\infty = \frac{\ln \frac1a}{1-a}.
\]
Similarly, 
\[
\int_0^\infty \frac{dw}{\big (\frac1a +w\big )(1+w)} = \frac{\ln a}{1-\frac1a} = \frac{ a \ln \frac1a}{1- a}.
\]

The other two integrals are more complicated, but we find that
\[
\int \frac{dw}{\big (\frac1a +w\big )\sqrt{1+w}} =
2\sqrt{\frac{a}{1-a}} \, \tan^{-1}\bigg(\sqrt{\frac{a}{1-a}}\sqrt{1 + w}\bigg) + C
\]
When we use the formula for the term with $a+w$, the number inside the arctangent is imaginary,
so we use also the formula
\[
\tan^{-1}(z) = \frac i2 \log\bigg(\frac{i+z}{i-z}\bigg).
\]
Hence we conclude
\[
\int \frac{dw}{(a +w)\sqrt{1+w}} =
-\sqrt{\frac{1}{1-a}} \, \log\bigg(\frac{\sqrt{1-a}+\sqrt{1 + w}}{\sqrt{1-a}-\sqrt{1 + w}}\bigg) + C.
\]
With these integral functions, we obtain that
\begin{align*}
&\int_0^\infty \bigg[ \frac1{a+w} + \frac1{\frac1a+w} \bigg] \frac{\sqrt{1+w}-1}{1+w} \, dw
=
-
\frac{1+a}{1-a}\ln\frac1a\\
&\qquad
+
2\sqrt{\frac{a}{1-a}} \bigg[ \frac\pi 2 - \tan^{-1}\bigg(\sqrt{\frac{a}{1-a}}\bigg)\bigg]
-
\sqrt{\frac{1}{1-a}} \bigg[ \log(-1) - \log\bigg(\frac{\sqrt{1-a}+1}{\sqrt{1-a}-1}\bigg) \bigg]
 \\
& =
2\sqrt{\frac{a}{1-a}} \tan^{-1}\bigg(\sqrt{\frac{1-a}{a}}\bigg)
+
\sqrt{\frac{1}{1-a}} \log\bigg(\frac{1+\sqrt{1-a}}{1-\sqrt{1-a}}\bigg)
-
\frac{1+a}{1-a}\ln\frac1a\, .
\end{align*}
The graph of this function, i.e.\ $G_1(a)$, is shown in Figure~\ref{fig:increasing}.

\begin{figure}
\centerline{\includegraphics[width=10cm]{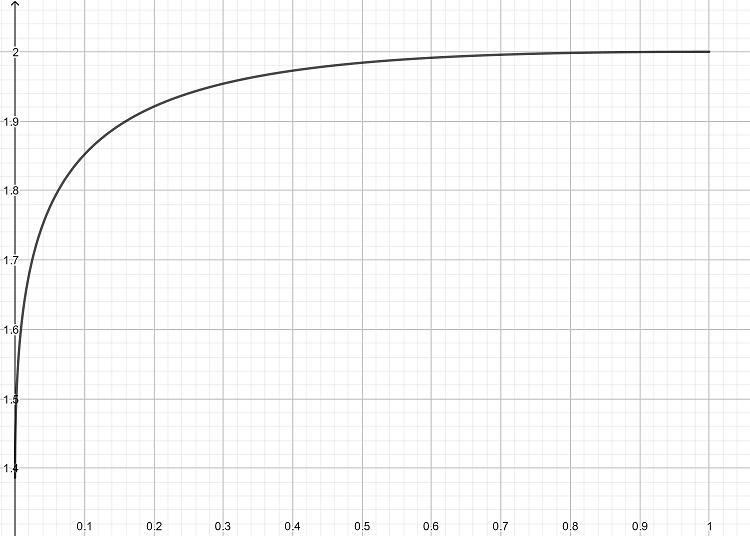}}
\caption{The graph of the function  $G_1(a)$ from Lemma~\ref{lem:increasing}.}\label{fig:increasing}
\end{figure}

We need to show that this expression is increasing in $a$. We change variables by defining
$b:= \sqrt{\frac{1-a}{a}}$ so that $a = \frac1{1+b^2}$ and our expression equals
\[
G (b):=
\frac2b \tan^{-1}(b)
+
2\frac{\sqrt{1+b^2}}b \log(b+\sqrt{1+b^2})
-
\frac{2+b^2}{b^2}\ln (1+b^2 ), ~~b\in (0,\infty ).
\]
Since $b$ is decreasing in $a$, we establish our claim by showing that $G$ is decreasing on $(0,\infty)$.
We calculate
\begin{align*}
\frac12 G'(b)
& =
-\frac1{b^2} \tan^{-1}(b)
+
\frac1{b(1+b^2)}
-
\frac{\log(b+\sqrt{1+b^2})}{b^2\sqrt{1+b^2}}
+
\frac1b
+
\frac{2}{b^3}\ln (1+b^2 )
-
\frac{2+b^2}{b(1+b^2)} \\
&=
-\frac1{b^2} \tan^{-1}(b)
-
\frac{\log(b+\sqrt{1+b^2})}{b^2\sqrt{1+b^2}}
+
\frac{2}{b^3}\ln (1+b^2 ).
\end{align*}
Hence it suffices to show that
\[
g(b):= \frac{b^2}2 G'(b) = -\tan^{-1}(b)
-
\frac{\log(b+\sqrt{1+b^2})}{\sqrt{1+b^2}}
+
\frac{2}{b}\ln (1+b^2 )
\]
is negative. We see that $g(0^+)=0$, and show that $g$ is decreasing on $(0,\infty )$. A calculation gives
\begin{align*}
g'(b) & =  - \frac 1{1+b^2}
+ b\frac{\log(b+\sqrt{1+b^2})}{(1+b^2)^{\frac32}}
- \frac{1}{1+b^2}
- \frac{2}{b^2}\ln (1+b^2 )
+ \frac{4}{1+b^2}\\
&= \frac 2{1+b^2} + b\frac{\log(b+\sqrt{1+b^2})}{(1+b^2)^{\frac32}} - \frac{2}{b^2}\ln (1+b^2 )
\end{align*}
With the new variable $c:=b^2$, we find that
\[
h(c) := c g'(\sqrt{c})
=
\frac{2c}{1+c}
+
\bigg(\frac{c}{1+c}\bigg)^{\frac32}\log(\sqrt{c}+\sqrt{1+c})
-
2\ln (1+c), ~~c\in (0,\infty ).
\]
We need to show that $h$ is negative on $(0,\infty )$, and we observe that $h(0^+)=0$. To show that
$h$ is decreasing on $(0,\infty )$, we calculate the derivative
\begin{align*}
h'(c) &= \frac2{(1+c)^2} +
\frac32 \bigg(\frac{c}{1+c}\bigg)^{\frac12} \frac{\log(\sqrt{c}+\sqrt{1+c})}{(1+c)^2}
+\frac{c}{2(1+c)^2} - \frac 2{1+c}\\
& =-\frac32 \,\frac{c}{(1+c)^2} +
\frac32 \bigg(\frac{c}{1+c}\bigg)^{\frac12} \frac{\log(\sqrt{c}+\sqrt{1+c})}{(1+c)^2},
\end{align*}
from which we define a function $k$ by 
\[
k(c) := \tfrac23 (1+c^2)^{\frac52}c^{-\frac12} h'(c) = -\sqrt{c(1+c)} + \log(\sqrt{c}+\sqrt{1+c}), ~~c\in (0,\infty ).
\]
Finally, we observe that $k(0^+)=0$ and
\[
 k'(c)
 = -\frac{1+2c}{2\sqrt{c(1+c)}} + \frac1{2\sqrt{c(1+c)}}
 =
 -\sqrt{\frac{c}{1+c}} \, \le 0 ~\mbox{ for $c\in (0,\infty )$}.
\]
Thus $k(c)< 0$ for $c\in (0,\infty )$, so that $h$ is decreasing on $(0,\infty )$ and thus negative, which implies that $g$ is decreasing and
negative for $c\in (0,\infty )$. Hence, $G$ is decreasing on $(0,\infty )$, which is equivalent to the original claim.
This completes the proof of the lemma.
\end{proof}

We are now ready to continue our investigation on Lemma~\ref{lem:inequality}, again.

\begin{lem}\label{lem:inequality2}
The maximum of the right-hand side in Lemma~\ref{lem:inequality} is achieved when $a=1$,
so that
\[ I(S,T) + I(T,S)
\le
(1+\gamma) \int_0^\infty \big[(1 +w)^{-\frac{2+\gamma}2} - (1 +w)^{-\frac{3+\gamma}2} \big] w^{-\frac{1-\gamma}2}\, dw,
\]
where $\gamma \in (-1, 1]$.
\end{lem}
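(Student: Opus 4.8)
The plan is to show that the single-variable function
\[
G_\gamma(a):=\int_0^\infty\Big[(a+w)^{-\frac{1+\gamma}2}+\big(\tfrac1a+w\big)^{-\frac{1+\gamma}2}\Big]\frac{\sqrt{1+w}-1}{1+w}\,w^{-\frac{1-\gamma}2}\,dw,
\]
which appears (up to the positive factor $\tfrac{1+\gamma}2$) on the right-hand side of Lemma~\ref{lem:inequality}, attains its maximum over $a\in(0,\infty)$ at $a=1$. Substituting $a=1$ collapses the two bracketed terms into $2(1+w)^{-\frac{1+\gamma}2}$ and, after writing $\frac{\sqrt{1+w}-1}{1+w}=(1+w)^{-1/2}-(1+w)^{-1}$, reproduces exactly the claimed integral. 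Since $G_\gamma(a)=G_\gamma(1/a)$ (the map $a\mapsto1/a$ merely swaps the two bracketed terms), it suffices to prove that $G_\gamma$ is increasing on $(0,1)$.

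Writing $p:=\tfrac{1+\gamma}2\in(0,1]$, so that $-\tfrac{1-\gamma}2=p-1$, and setting $\varrho(w):=\frac{\sqrt{1+w}-1}{1+w}\,w^{p-1}$, I would differentiate under the integral sign and reduce the monotonicity to the non-negativity, for $a\in(0,1)$, of
\[
\tfrac1p\,G_\gamma'(a)=\int_0^\infty a^{-2}\big(\tfrac1a+w\big)^{-p-1}\varrho(w)\,dw-\int_0^\infty(a+w)^{-p-1}\varrho(w)\,dw .
\]
The two integrands are not comparable termwise, so the decisive move is the change of variables $w=z/a$ in the first integral: a short computation turns it into $\int_0^\infty(1+z)^{-p-1}\frac{\sqrt{a+z}-\sqrt a}{\sqrt a\,(a+z)}\,z^{p-1}\,dz$, which now carries the same weight $z^{p-1}$ and the same overall scale as the second integral. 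After this alignment $\tfrac1p G_\gamma'(a)=\int_0^\infty z^{p-1}E(z)\,dz$ with a single explicit integrand $E(z)$, and it suffices to prove the pointwise bound $E(z)\ge0$.

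Clearing denominators, $E(z)\ge0$ is equivalent to $(a+z)^p\big(\sqrt{1+z/a}-1\big)\ge(1+z)^p\big(\sqrt{1+z}-1\big)$, i.e.\ to $r^p\big(\sqrt{1+z/a}-1\big)\ge\sqrt{1+z}-1$ where $r:=\frac{a+z}{1+z}\in(0,1)$. Because $0<r<1$ and $0<p\le1$ give $r^p\ge r$, it is enough to establish the cleaner inequality $r\big(\sqrt{1+z/a}-1\big)\ge\sqrt{1+z}-1$, which upon multiplying by $1+z$ reads $\Psi(a)\ge\Psi(1)$ for $\Psi(a):=\frac{(a+z)^{3/2}}{\sqrt a}-(a+z)$. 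This follows by monotonicity: with $q:=\sqrt{1+z/a}\ge1$ one computes $\Psi'(a)=\tfrac32 q-\tfrac12 q^3-1=-\tfrac12(q-1)^2(q+2)\le0$, so $\Psi$ is non-increasing in $a$ and hence $\Psi(a)\ge\Psi(1)$ for $a\le1$.

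I expect the main obstacle to be precisely the alignment step: before the substitution the integrand of $G_\gamma'$ changes sign once in $w$, so termwise comparison fails and one must find the change of variables that renders the difference integrand of one sign. The reduction $r^p\ge r$ isolates the genuinely borderline case $p=1$ (that is, $\gamma=1$, $\alpha=0$), in which $r^p=r$ and $E(z)\ge0$ is tight; here the explicit monotonicity established in Lemma~\ref{lem:increasing} furnishes an independent confirmation and an alternative way to close the endpoint. The only routine technical point is the justification of differentiation under the integral sign, which is immediate from the integrable majorants given by the $w^{p-1}$ behaviour near $0$ and the $w^{-3/2}$ decay at infinity.
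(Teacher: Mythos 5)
Your proof is correct, and while it shares the paper's skeleton up to the computation of $G'(a)$ (the symmetry $G_\gamma(a)=G_\gamma(1/a)$, differentiation under the integral sign, and a change of variables to align the two resulting integrals), the decisive positivity step is handled by a genuinely different and more elementary device. The paper substitutes $v=w/a$ in one integral and $v=aw$ in the other; this aligns the integrands into $g(1+\tfrac va)-g(1+va)$ with $g(x)=x^{-1/2}-x^{-1}$, which is unimodal, so the aligned difference changes sign once and pointwise comparison fails. The paper therefore resorts to a single-crossing argument against the decreasing weight $\big(\tfrac{v}{1+v}\big)^{-\frac{1-\gamma}2}$ to reduce to the case $\gamma=1$, and that case is in turn settled by the long closed-form evaluation of Lemma~\ref{lem:increasing} (the chain of auxiliary functions with arctangents and logarithms). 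Your asymmetric substitution $w=z/a$ in only the first integral produces instead an integrand $E(z)$ that is non-negative pointwise: I checked the reduction of $E(z)\ge 0$ to $r^p\big(\sqrt{1+z/a}-1\big)\ge\sqrt{1+z}-1$ with $r=\tfrac{a+z}{1+z}\in(0,1)$, the step $r^p\ge r$ for $p=\tfrac{1+\gamma}2\in(0,1]$, and the factorization $\Psi'(a)=-\tfrac12(q-1)^2(q+2)\le 0$; all are correct, and the borderline case $p=1$ is covered directly (with strict inequality for $z>0$), so Lemma~\ref{lem:increasing} is not needed even as a backstop. What your route buys is a substantial shortening: the longest computation in the paper becomes dispensable, and the sign of $G'$ follows from a two-line pointwise inequality. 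The only item to spell out in a final write-up is the justification of differentiating under the integral sign, which, as you note, is immediate from the $w^{p}$ behaviour of the integrand at $0$ and its $O(w^{-5/2})$ decay at infinity, locally uniformly in $a$.
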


\begin{proof}
We consider the function
\be\label{HS-eq6-ex2}
G(a):=
\int_0^\infty
\Big[ (a +w)^{-\frac{1+\gamma}2} + \big(\tfrac1a +w\big)^{-\frac{1+\gamma}2} \Big]
\frac{\sqrt{1+w}-1}{1+w} w^{-\frac{1-\gamma}2}\, dw,
\ee
where $\gamma \in (-1, 1]$ and $ a \in (0,\infty )$. We need to show that $G$ is maximized by $a=1$. To that end, we consider the derivative with respect to
$a$:
\begin{align*}
\frac 2{1+\gamma} G'(a)
&=
\int_0^\infty
\Big[ -(a +w)^{-\frac{3+\gamma}2} + a^{-2}\big(\tfrac1a +w\big)^{-\frac{3+\gamma}2} \Big]
\frac{\sqrt{1+w}-1}{1+w} w^{-\frac{1-\gamma}2}\, dw \\
&=
-\int_0^\infty
\Big(\frac{w}{a +w}\Big)^{\frac{3+\gamma}2}
\frac{\sqrt{1+w}-1}{(1+w)w^2} \, dw
+
\int_0^\infty
\frac1{ a^2}\Big(\frac{w}{\frac1a +w}\Big)^{\frac{3+\gamma}2}
\frac{\sqrt{1+w}-1}{(1+w)w^2}\, dw.
\end{align*}
In the first integral we use the change of variables $v:=\frac wa$ and this gives
\[
\int_0^\infty
\Big(\frac{w}{a +w}\Big)^{\frac{3+\gamma}2}
\frac{\sqrt{1+w}-1}{(1+w)w^2} \, dw
=
\frac1a \int_0^\infty
\Big(\frac{v}{1+v}\Big)^{\frac{3+\gamma}2}
\frac{\sqrt{1+av}-1}{(1+av)v^2}\, dv,
\]
whereas in the second one we use $v:=aw$ and obtain
\[
\int_0^\infty
\frac1{ a^2}\bigg(\frac{w}{\frac1a +w}\bigg)^{\frac{3+\gamma}2}
\frac{\sqrt{1+w}-1}{(1+w)w^2} \, dw
=
\frac1a\int_0^\infty
\bigg(\frac{v}{1+v}\bigg)^{\frac{3+\gamma}2}
\frac{\sqrt{1+\frac va}-1}{(1+\frac va)v^2}\, dv.
\]
Therefore, we have the following expression for the derivative
\[
\frac {2a}{1+\gamma} G'(a) =
\int_0^\infty
\Big(\frac{v}{1+v}\Big)^{\frac{3+\gamma}2}\frac{1}{v^2}
\bigg[\frac{\sqrt{1+\frac va}-1}{1+\frac va}-\frac{\sqrt{1+av}-1}{1+av} \bigg]\, dv.
\]
Denote
\[g(x):= x^{-\frac12}-x^{-1}
\]
and observe that the square bracket term equals $g(1+\frac va)-g(1+va)$.
We find that
\[g'(x) = -\tfrac12 x^{-\frac32}+x^{-2} = \tfrac12 (2-\sqrt x)x^{-2}
\]
so that $g$ is increasing on $[0,\sqrt2]$ and decreasing on $[\sqrt2,\infty)$. When $a<1$, we have $1+\frac va>1+av$ and so
it follows that $v\mapsto g(1+\frac va)-g(1+va)$ is positive until some value $v_0$ and then negative.
Furthermore, the function
\[v\mapsto \left (\frac{v}{1+v}\right )^{-\frac{1-\gamma}2}
\]
is decreasing on $(0,\infty )$. Therefore, we have
\[
[g(1+\tfrac va)-g(1+va)] \Big(\frac{v}{1+v}\Big)^{-\frac{1-\gamma}2}
\ge 
[g(1+\tfrac va)-g(1+va)] \Big(\frac{v_0}{1+v_0}\Big)^{-\frac{1-\gamma}2}
\]
both when $v\le v_0$ and $v\ge v_0$. We conclude that
\be\label{HS-eq6-ex3}
\frac {2a}{1+\gamma} G'(a)
\ge
\Big(\frac{v_0}{1+v_0}\Big)^{-\frac{1-\gamma}2}
\int_0^\infty
\Big(\frac{v}{1+v}\Big)^{2}\frac{1}{v^2}
\bigg[\frac{\sqrt{1+\frac va}-1}{1+\frac va}-\frac{\sqrt{1+av}-1}{1+av} \bigg]\, dv.
\ee
Up to a constant, the right-hand side of \eqref{HS-eq6-ex3} is the derivative of the function in the case $\gamma=1$.
By Lemma~\ref{lem:increasing}, this function is increasing on $(0,1)$, so its derivative,
and hence the right-hand side of the inequality in \eqref{HS-eq6-ex3} above, is non-negative. It follows that
$G' (a)\ge 0$ on $(0,1)$. Furthermore, by symmetry we conclude that $G' (a)\le 0$ on $(1,\infty)$.
Hence the maximum of $G$ occurs at $a=1$, as claimed.
\end{proof}


Finally, we are ready to prove that the inequality \eqref{eq:mainClaim} holds when $\alpha \in [0, 1)$.

\subsection{Proof of Theorem \ref{HS-thm1}}
 To prove \eqref{eq10-ex1} it suffice to show that
\[\sup \{I(S,T) + I(T,S):\,0\le S\le 4,~0\le T\le 4\}\le 2(\beta(\alpha)-1). 
\]
Recall that $\gamma=1-2\alpha$ and so, by Lemma~\ref{lem:inequality2}, to suffices to show equivalently that
\[ (1-\alpha) \int_0^\infty \big[(1+w)^{\alpha-\frac{3}2} - (1+w)^{\alpha-2} \big] w^{-\alpha}\, dw
\le  \frac{\Gamma (\frac12)\Gamma (2-\alpha )}{\Gamma (\frac32-\alpha )} -1,
\]
by the definition of $\beta (\alpha)$.
We then consider the beta function (not the function $\beta(\alpha)$ from before), and its relation to the gamma function as follows
\[ 
\int_0^\infty t^{x-1}(1+t)^{-x-y}\, dt = \frac{\Gamma(x)\Gamma(y)}{\Gamma(x+y)}.
\]
We use this formula with $x=1-\alpha$ and $y=\frac12$ or $y=1$. This gives that
\begin{align*}
(1-\alpha) \int_0^\infty \big[(1+w)^{\alpha-\frac{3}2} - (1+w)^{\alpha-2} \big] w^{-\alpha}\, dw 
&=
(1-\alpha) \bigg[\frac{\Gamma(\frac12)\Gamma(1-\alpha)}{\Gamma(\frac32-\alpha)} - \frac{\Gamma(1)\Gamma(1-\alpha)}{\Gamma(2-\alpha)} \bigg] \\
&= \frac{\Gamma(\frac12)\Gamma(2-\alpha)}{\Gamma(\frac32-\alpha)} - 1,
\end{align*}
since $(1-\alpha)\Gamma(1-\alpha) = \Gamma(2-\alpha)$ and $\Gamma(1)=1$.
This completes the proof of the desired estimate \eqref{eq:mainClaim}, which, by Lemma~\ref{HS-lem1} implies
that the Gehring--Hayman inequality holds with constant $\beta(\alpha)$.

It remains to be shown that $\beta(\alpha)$ given by \eqref{HS-eq2} cannot be replaced
by any smaller constant.  We show that the extremal function for our problem is $k_{\alpha}$ defined by
$k_{\alpha}(z):=z/(1-z)^{2-2\alpha}$. We calculate that
\[
k_{\alpha}'(z)=\frac{1+(1-2\alpha)z}{(1-z)^{3-2\alpha}}
\quad\text{and}\quad
\frac{z k_\alpha'(z)}{k_\alpha(z)} = \alpha + (1-\alpha)\frac{1+z}{1-z}.
\]
From this we see that $k_{\alpha} \in {\mathcal S}^*(\alpha ) $.
As before, we set $\gamma=1-2\alpha$ and we have that
\[
|k_\alpha'(re^{i\theta})| = \frac{\sqrt{1+(\gamma r)^2+2\gamma r\cos \theta}}{(1+r^2-2r\cos\theta)^{1+\frac\gamma2}}\, .
\]
Furthermore, $|k_\alpha(e^{i\theta})| = (2(1-\cos \theta))^{-\frac{1+\gamma}2}$. Let us denote again
$T:=2(1-\cos \theta)$. Then we have shown that
\[
\lim_{r\to 1} \frac{\ell(r,\theta)}{|k_\alpha(re^{i\theta})|}
= T^{\frac{1+\gamma}2}
\int_0^1 \frac{\sqrt{(1+\gamma u)^2-\gamma  T u}}{((1-u)^2+Tu)^{1+\frac\gamma2}}\, du\, .
\]
We are interested in the limit value of the right-hand side when $T\to 0$. For some small $\epsilon>0$, we restrict the integral to
the range $u\in (1-\epsilon,1)$ for a lower bound, and estimate
\[
\sqrt{(1+\gamma u)^2-\gamma  T u}
\ge
(1+\gamma) (1- O(\epsilon+T)).
\]
We estimate the remaining terms with the same change of variables $w:=\frac {Tu}{(1-u)^2}$ as before:
\[
\int_{1-\epsilon}^1 \frac{T^{\frac{1+\gamma}2}}{((1-u)^2+Tu)^{1+\frac\gamma2}}\, du
\ge
\int_{T(1-\epsilon)/\epsilon^2}^\infty \frac{T^{\frac{-1+\gamma}2}(1-u)^{1-\gamma}}{(2-\epsilon)(1+w)^{1+\frac\gamma2}}\, dw.
\]
Also as before, solving for $T^{-\frac12}(1-u)$ and using $T\le \epsilon w$ (which follows from $u\ge 1-\epsilon$), we have:
\[
T^{-\frac12}(1-u)
=
\frac2{\sqrt{T+4w}+\sqrt{T}}
\ge
\frac1{\sqrt{\epsilon/4+1}+\sqrt{\epsilon/4}} \frac1{\sqrt{w}}
=(1-O(\sqrt\epsilon))\frac1{\sqrt{w}}.
\]
With the previous two estimates, we obtain
\[
\lim_{T\to 0} \int_{1-\epsilon}^1 \frac{T^{\frac{1+\gamma}2}\sqrt{(1+\gamma u)^2-\gamma  T u}}{((1-u)^2+Tu)^{1+\frac\gamma2}}\, du
\ge
(1-O(\sqrt\epsilon))\frac{1+\gamma}2
\int_0^\infty w^{-\frac{1-\gamma}2}(1+w)^{-1-\frac\gamma2}\, dw
\]
so that
\[
\lim_{\theta\to 0} \lim_{r\to 1} \frac{\ell(r,\theta)}{|k_\alpha(re^{i\theta})|}
\ge (1-O(\sqrt\epsilon)) (1-\alpha) \int_0^\infty w^{-\alpha}(1+w)^{\alpha - \frac32}\, dw
=
(1-O(\sqrt\epsilon)) \frac{\Gamma(\frac12)\Gamma(2-\alpha)}{\Gamma(\frac32-\alpha)}\, .
\]
The claim follows from this as $\epsilon \to 0$.
\hfill $\Box$

\bigskip

\subsection*{Acknowledgements}
This work was completed during the visit of the second author to the University of Turku, Finland in 2019-2020, and this author thanks Prof. Matti Vuorinen
for his continuous support and his encouragement. The visit of this author was supported by a grant under ``India-Finland Joint
Call for Mobility of Researchers Programme" and this author acknowledges the support received from the Department of Science \& Technology, Government of India,
and the Academy of Finland.

\vspace{.4cm}
\noindent
{\bf Compliance with ethical standards:} 

\vspace{.1cm}

\noindent
{\bf Conflict of interest.} The authors declare that they do not have conflict of interests.

\vspace{.1cm}
\noindent
{\bf Ethical standards.} The research complies with ethical standards.



\begin{thebibliography}{99}

%
%

\bibitem{AW-09}
F. G. Avkhadiev and K.-J. Wirths,
\textit{Schwarz-Pick type inequalities},
Frontiers in Mathematics, Birkhäuser Verlag, Basel, 2009. viii + 156 pp.

\bibitem{BKP} {\rm R. Balasubramanian, V. Karunakaran and S. Ponnusamy,}
A proof of Hall's conjecture on starlike mappings,
\emph{J. London Math. Soc.} \textbf{48}(2) (1993), 271--282.

\bibitem{BP-96} {\rm R. Balasubramanian and S. Ponnusamy,}
An alternate proof of Hall's theorem on a conformal mappings inequality,
\emph{Bull. Belgium Math. Soc.} \textbf{3} (1996), 209--213.

\bibitem{BC} {\rm A. F. Beardon and  T. K. Carne,} Euclidean and hyperbolic
lengths of images of arcs, \emph{Proc. London Math. Soc.,} {\bf
97} (2008), 183--208.

\bibitem{CT}
{\rm T. Carroll and  J. B. Twomey,} Conformal mappings of close-to-convex domains,
\emph{J. London Math. Soc.,} {\bf 55} (1997), 489--498.

\bibitem{DeB1} {\rm L. de Branges,}
A proof of the Bieberbach conjecture,
\emph{Acta Math.} \textbf{154} (1985), 137--152.

 \bibitem{CLP} {\rm S. L. Chen, G. Liu and  S. Ponnusamy,}
Linear measure and $K$-quasiconformal harmonic mappings (in
Chinese), \emph{Sci. Sin. Math.,} {\bf 47} (2017), 1--10.

\bibitem{CP2019} {\rm Sl. Chen and S. Ponnusamy},
{Radial length, radial John disks and $K$-quasiconformal harmonic mappings,}
\emph{Potential Analysis} \textbf{50}(2019), 415--437.

\bibitem{CS} J. G. Clunie and T. Sheil-Small,
Harmonic univalent functions,
\emph{Ann. Acad. Sci. Fenn. Ser. A. I.} \textbf{9} (1984), 3--25.


\bibitem{Du} P. L. Duren,
\textit{Univalent Functions}, Springer-Verlag, New York, (1983).

\bibitem{DuHar} P. Duren,
\textit{Harmonic mappings in the plane},
 Cambridge university Press, New York, (2004).


\bibitem{GH} {\rm F. W. Gehring and W. K. Hayman},
An inequality in the theory of conformal mapping,
\emph{J. Math. Pures Appl.} \textbf{41} (1963), 353--361.

\bibitem{Gol} G. M. Goluzin,
\textit{Geometric Theory of Functions of a Complex Variable},
Translations of Mathematical Monographs, Vol. 26, AMS, Providence (1969).

\bibitem{Good83} A. W. Goodman,
\textit{Univalent Functions}, Vols. I, II, Tampa, FL: Mariner (1983).

\bibitem{GraKoh2003} I. Graham and G. Kohr,
\textit{Geometric Function Theory in One and Higher Dimensions},
New York: Marcel Dekker Inc. (2003)

\bibitem{Ha1} {\rm R. R. Hall},
 The length of ray-images under starlike mappings,
\emph{Mathematika} \textbf{23} (1976), 147--150.

\bibitem{Ha2} {\rm R. R. Hall}, A conformal mapping inequality for starlike functions of order $\frac12$,
\emph{Bull. London Math. Soc.} \textbf{12} (1980), 119--126.

\bibitem{Hay55} W. K. Hayman,
\textit{Multivalent Functions}, Cambridge Tracts in Mathematics 110,
2nd ed. Cambridge: Cambridge University Press (1994).

\bibitem{HayLing19} W. K. Hayman and E. F. Lingham,
\textit{Research problems in function theory},
Fiftieth anniversary edition of Problem Books in Mathematics, Springer, Cham, 2019. viii+284 pp.

\bibitem{Ka} {\rm V. Karunakaran},
Length of ray-images under conformal maps,
\emph{Proc. Amer. Math. Soc.} \textbf{87} (1983), 289--294.

%

\bibitem{Ken} {\rm P. B. Kennedy,}
Conformal mapping of bounded domains, \emph{J. London Math. Soc.,}
{\bf 31} (1956), 332--336.

\bibitem{Ke} {\rm F. R. Keogh,} A property of bounded schlicht functions,
\emph{J. London Math. Soc.,} {\bf 29} (1954), 379--382.

%

\bibitem{Lehto} {\rm O. Lehto,}
\textit{Univalent functions and Teichm\"{u}ller spaces}, Graduate Texts in Math. 109,
Springer-Verlag, NewYork (1987).


\bibitem{PommUniv-75} Ch. Pommerenke,
\textit{Univalent Functions}, with a chapter on quadratic differentials
by Gerd Jensen. Studia Mathematica/Mathematische Lehrb\"{u}cher 15.
G\"{o}ttingen: Vandenhoeck and Ruprecht (1975).

\bibitem{PommBBCM-92} Ch. Pommerenke,
\textit{Boundary Behaviour of Conformal Maps},
Springer, New York (1992)


\bibitem{SaRa2013}  {\sc S. Ponnusamy and A. Rasila},
\textrm{Planar harmonic and quasiregular mappings},
Topics in Modern  Function Theory (Editors. St. Ruscheweyh and S. Ponnusamy): Chapter in
CMFT, RMS-Lecture Notes Series No. 19, 2013, pp. 267--333.

\bibitem{Rob36} {\rm Malcolm I. S. Robertson},
On the theory of univalent functions,
\emph{Ann. of Math.} \textbf{37}(2) (1936), 3749--408.


\bibitem{R1} {\rm St. Ruscheweyh}, \textit{Convolutions in geometric function theory},
Les Presses de l'Universit\'e de Montr\'eal, Montr\'eal (1982).

\bibitem{Sh} {\rm T. Sheil-Small},
Some conformal mapping inequalities for starlike and convex functions,
\emph{J. London Math. Soc.} \textbf{1}(2) (1969), 577--587.

\bibitem{Small} {\rm T. Sheil-Small,}
Constants for planar harmonic mappings,
\emph{J. London Math. Soc.,} {\bf 42} (1990), 237--248.


\bibitem{Zy} {\rm A. Zygmund}, Trigonometric series, Cambridge Univ. Press,
London and NewYork, 1968.
\end{thebibliography}
\end{document}